\newcommand\1{{\mathds{1}}}
\renewcommand\ae{{a.\@e.\@}}
\newcommand\BV{{\rm BV}}
\newcommand*{\coleq}{\mathrel{\vcenter{\baselineskip0.5ex\lineskiplimit0pt\hbox{\normalsize.}\hbox{\normalsize.}}}=}
\newcommand\D{{\rm D}}
\newcommand\DM{{\mathcal{DM}^\infty(\Omega,\R^n)}}
\newcommand\DMloc{{\mathcal{DM}^\infty_\loc(\Omega,\R^n)}}
\renewcommand\d{{\rm d}}
\newcommand\Div{{\rm div}}
\newcommand\dx{{\,\d x}}
\newcommand\ecke{\mathop{\hbox{\vrule height 7pt width .3pt depth 0pt \vrule height .3pt width 5pt depth 0pt}}\nolimits}
\renewcommand\L{{\rm L}}
\newcommand\loc{{\rm loc}}
\newcommand\lrb[1]{{\llbracket#1\rrbracket}}
\renewcommand\H{{\mathcal{H}}}
\newcommand\Ln{{\mathcal{L}^n}}
\newcommand\N{{\mathds{N}}}
\newcommand\p{\varphi}
\newcommand\qq{\qquad}
\newcommand\R{{\mathds{R}}}
\newcommand\Sinfty{{S^\infty(\Omega,\R^n)}}
\newcommand\W{{\rm W}}
\newtheorem{theorem}{Theorem}[section]
\newtheorem{lemma}[theorem]{Lemma}
\newtheorem{proposition}[theorem]{Proposition}
\newtheorem{definition}[theorem]{Definition}
\newtheorem*{remark}{Remark}
\begin{document}

\title{An Anzellotti type pairing for divergence-measure fields\\
  and a notion of weakly super-$1$-harmonic functions}

\author{
  Christoph Scheven\footnote{
    Fakult\"at f\"ur Mathematik, Universit\"at Duisburg-Essen,
    Thea-Leymann-Str. 9, 45127 Essen, Germany.\newline
    Email address: \href{mailto:christoph.scheven@uni-due.de}
      {\tt christoph.scheven@uni-due.de}.
    URL: \href{http://www.uni-due.de/mathematik/ag_scheven}
      {\tt http:/\!/www.uni-due.de/mathematik/ag\_scheven}.
    }
  \and
  Thomas Schmidt\footnote{
    Fachbereich Mathematik, Universit\"at Hamburg,
    Bundesstr. 55, 20146 Hamburg, Germany.\newline
    Email address: \href{mailto:thomas.schmidt@math.uni-hamburg.de}
      {\tt thomas.schmidt@math.uni-hamburg.de}.
    URL: \href{http://www.math.uni-hamburg.de/home/schmidt}
      {\tt http:/\!/www.math.uni-hamburg.de/home/schmidt}.
    }
    \footnote{Former institution: Department Mathematik,
      Friedrich-Alexander-Universit\"at Erlangen-N\"urnberg.
    }
  }

\date{}

\maketitle

\begin{abstract}
  We study generalized products of divergence-measure fields and
  gradient measures of $\BV$ functions. These products depend on the
  choice of a representative of the $\BV$ function, and here we single
  out a specific choice which is suitable in order to define and
  investigate a notion of weak supersolutions for the $1$-Laplace
  operator.
\end{abstract}

\section{Introduction}

For a positive integer $n$ and an open set $\Omega$ in $\R^n$, we consider the
$1$-Laplace equation
\begin{equation}\label{eq:Delta1}
  \Div\,\frac{\D u}{|\D u|}\equiv0\qq\text{on }\Omega\,.
\end{equation}
This equation formally arises as the Euler equation of the total variation and
is naturally posed for functions $u\colon\Omega\to\R$ of locally bounded
variation whose gradient $\D u$ is merely a measure. In order to make sense of
\eqref{eq:Delta1} in this setting it has become standard
\cite{KOHTEM83,DEMENGEL99,DEMENGEL04,BELCASNOV05,MERSEGTRO08,MERSEGTRO09}
to work with a generalized product, which has been studied systematically by
Anzellotti \cite{ANZELLOTTI83a,ANZELLOTTI83b}. The product is defined for
$u\in\BV_\loc(\Omega)$ and $\sigma\in\L^\infty_\loc(\Omega,\R^n)$ with vanishing
distributional divergence $\Div\,\sigma$ as the distribution
\[
  \lrb{\sigma,\D u}
  \coleq\Div(u\sigma)\in{\mathscr D}'(\Omega)\,,
\]
and in fact the pairing $\lrb{\sigma,\D u}$ turns out to be a signed Radon
measure on $\Omega$. By requiring $\|\sigma\|_{\L^\infty(\Omega,\R^n)}\le1$ and
$\lrb{\sigma,\D u}=|\D u|$ one can now phrase precisely what it means that
$\sigma$ takes over the role of $\frac{\D u}{|\D u|}$, and whenever there exists
some $\sigma$ with all these properties (including $\Div\,\sigma\equiv0$), one
calls $u\in\BV_\loc(\Omega)$ a $\BV$ solution of \eqref{eq:Delta1} or a weakly
$1$-harmonic function on $\Omega$. In a similar vein, variants of the pairing
$\lrb{\sigma,\D u}$ can be used to define $\BV$ solutions $u$ of
$\Div\frac{\D u}{|\D u|}=f$ for right-hand sides $f\in\L^n_\loc(\Omega)$ and to
explain $\BV\cap\L^\infty$ solutions $u$ of $\Div\frac{\D u}{|\D u|}=f$ even for
arbitrary $f\in\L^1_\loc(\Omega)$.

In this note we deal with a notion of \emph{super}solutions of \eqref{eq:Delta1}
or --- this is essentially equivalent --- of solutions of
$\Div\frac{\D u}{|\D u|}={-}\mu$ with a Radon measure $\mu$ on $\Omega$. To this
end, we first collect some preliminaries in Section \ref{sec:pre}. Then, in
Section \ref{sec:Anzellotti}, we consider generalized pairings, which make sense
even for $\L^\infty$ divergence-measure fields $\sigma$, but require precise
evaluations of $u\in\BV_\loc(\Omega)\cap\L^\infty_\loc(\Omega)$ up to sets of
zero $(n{-}1)$-dimensional Hausdorff measure $\H^{n-1}$. We mainly investigate a
pairing $\lrb{\sigma,\D u^+}$, which is built with a specific $\H^{n-1}$-{\ae}
defined representative $u^+$ of $u$ and which does not seem to have been
considered before (while a similar pairing with the mean-value representative
$u^\ast$ already occurred in \cite[Theorem 3.2]{CHEFRI99} and
\cite[Appendix A]{MERSEGTRO09}). Adapting the approach in
\cite[Section 5]{BECSCH15}, we moreover deal with an up-to-the-boundary pairing
$\lrb{\sigma,\D u^+}_{u_0}$, which accounts for a boundary datum $u_0$. In
Section \ref{sec:super-1} we employ the local pairing $\lrb{\sigma,\D u^+}$ in
order to introduce a notion of weakly super-$1$-harmonic functions, and we prove
a compactness statement which crucially depends on the choice of the
representative $u^+$. Finally, Section \ref{sec:super-1-Dir} is concerned with a
refined notion of super-$1$-harmonicity which incorporates Dirichlet boundary
values. This last notion is based on (a modification of) the pairing
$\lrb{\sigma,\D u^+}_{u_0}$.

We emphasize that the proofs, which are omitted in this announcement, can be
found in the companion paper \cite{SCHSCH16super1}, where we also provide a more
detailed study of pairings and supersolutions together with adaptions to the
case of the minimal surface equation. Furthermore, in our forthcoming work
\cite{SCHSCH16obst}, we will discuss connections with obstacle problems and
convex duality.

\section{Preliminaries}\label{sec:pre}

\noindent\textbf{\boldmath$\L^\infty$ divergence-measure fields.} We
record two results related to the classes
\begin{gather*}
  \DMloc
  \coleq\{\sigma\in\L^\infty_\loc(\Omega,\R^n)\,:\,
  \Div\,\sigma\text{ exists as a signed Radon measure on }\Omega\}\,,\\
  \DM
  \coleq\{\sigma\in\L^\infty(\Omega,\R^n)\,:\,
  \Div\,\sigma\text{ exists as a finite signed Borel measure on }\Omega\}\,.
\end{gather*}

\begin{lemma}[absolute-continuity property for divergences of
  $\L^\infty$ vector fields]\label{lem:Chen-Frid}
  Consider $\sigma\in\DMloc$. Then, for every Borel set $A\subset\Omega$ with
  $\mathcal{H}^{n-1}(A)=0$, we have $|\Div\,\sigma|(A)=0$. 
\end{lemma}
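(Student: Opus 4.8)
The plan is to deduce the claim from the differentiation theory of Radon measures, the only genuinely analytic input being a flux bound that reflects the boundedness of $\sigma$. Throughout write $\mu\coleq\Div\,\sigma$; by restricting to a relatively compact subset we may assume $\sigma\in\DM$, so that $\mu$ is a finite measure. Since $|\mu|$ is a Radon measure it is inner regular, so $|\mu|(A)=\sup\{|\mu|(K):K\subset A\text{ compact}\}$, and as every such $K$ again satisfies $\H^{n-1}(K)=0$, it suffices to prove $|\mu|(K)=0$ for compact $K\subset\Omega$ with $\H^{n-1}(K)=0$.

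The key reduction is to the upper $(n{-}1)$-dimensional density $\Theta(x)\coleq\limsup_{r\to0}r^{-(n-1)}|\mu|(B_r(x))$. I would invoke the standard density-comparison theorem: if a finite Radon measure $\nu$ satisfies $\limsup_{r\to0}r^{-(n-1)}\nu(B_r(x))\le t$ at every point $x$ of a Borel set $B$, then $\nu(B)\le C(n)\,t\,\H^{n-1}(B)$. Granting for the moment that $\Theta(x)<\infty$ holds for $|\mu|$-almost every $x$, the proof concludes quickly: split $A$ into the level sets $A_k\coleq\{x\in A:\Theta(x)\le k\}$ and the exceptional set $A_\infty\coleq\{x\in A:\Theta(x)=\infty\}$. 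The comparison theorem gives $|\mu|(A_k)\le C(n)\,k\,\H^{n-1}(A_k)=0$ for each $k$, while $|\mu|(A_\infty)\le|\mu|(\{\Theta=\infty\})=0$ by assumption; summing yields $|\mu|(A)=0$.

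It remains to establish the density bound $\Theta<\infty$ $|\mu|$-almost everywhere, and this is where the structure $\mu=\Div\,\sigma$ enters. Mollifying, $\Div(\sigma\ast\rho_\varepsilon)=(\Div\,\sigma)\ast\rho_\varepsilon$, and integrating over a ball and applying the classical divergence theorem gives $\int_{B_r(x)}(\Div\,\sigma)\ast\rho_\varepsilon\dx=\int_{\partial B_r(x)}(\sigma\ast\rho_\varepsilon)\cdot\nu\,\,\d\H^{n-1}$; letting $\varepsilon\to0$ one obtains, for $\Ln$-almost every $r$, the flux bound $|\mu(B_r(x))|\le\|\sigma\|_{\L^\infty}\,n\omega_n\,r^{n-1}$. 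This controls the \emph{signed} mass of every small ball at the scale $r^{n-1}$, and in particular forbids $\mu$ from placing positive mass on a set of dimension strictly below $n{-}1$.

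The main obstacle is that this bound controls $\mu(B_r(x))$ rather than $|\mu|(B_r(x))$, so a naive ball-by-ball estimate fails; indeed $|\mu|(B_r(x))$ need not be of order $r^{n-1}$, as the example $\sigma=f(x_1)e_1$ with $f$ bounded but of large total variation shows, where $\mu\ll\Ln$ yet $\Theta$ nevertheless vanishes. The resolution is to note that only the part of $|\mu|$ singular with respect to $\Ln$ can produce infinite density: decomposing $\mu=\mu_a+\mu_s$ with $\mu_a\ll\Ln$ and $\mu_s\perp\Ln$, the absolutely continuous part satisfies $\mu_a\ll\Ln\ll\H^{n-1}$ trivially, so $\Theta=0$ holds $\mu_a$-a.e., and one only has to rule out that $\mu_s$ concentrates on a set of dimension below $n{-}1$. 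I would settle this by combining the flux bound with Besicovitch differentiation of $|\mu|$ against $\Ln$: at $\mu_s$-almost every point the blow-up leaves no surviving cancellation, so along a sequence of radii the signed mass $|\mu(B_r(x))|$ is comparable to $|\mu_s|(B_r(x))$, whence the flux bound forces $|\mu_s|(B_r(x))\le C r^{n-1}$ and thus $\Theta(x)<\infty$. Verifying this non-cancellation at the singular points, i.e.\ that the signed and total variations have comparable density $\mu_s$-a.e., is the technical heart of the argument; the rest is soft measure theory.
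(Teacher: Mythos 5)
Your strategy --- reduce the claim to the finiteness of the upper density $\Theta(x)=\limsup_{r\to0}r^{-(n-1)}|\mu|(B_r(x))$ at $|\mu|$-a.e.\ point and then apply the standard density/Hausdorff-measure comparison theorem --- is viable, but as written it has a genuine gap at exactly the step you flag as the technical heart, and the tool you propose for closing it is the wrong one. Besicovitch differentiation of $|\mu|$ \emph{against $\Ln$} gives no information at the points where $|\mu|$ is singular with respect to $\Ln$, which are precisely the points you need to control, and the detour through $\mu=\mu_a+\mu_s$ does not produce the asserted non-cancellation. What actually works is differentiation of $\mu$ \emph{against $|\mu|$}: by the polar decomposition one has $\mu=f|\mu|$ with $|f|=1$ $|\mu|$-a.e., and the Besicovitch--Lebesgue differentiation theorem (for the positive Radon measure $|\mu|$) yields $\mu(B_r(x))/|\mu|(B_r(x))\to f(x)\in\{-1,1\}$ for $|\mu|$-a.e.\ $x$, hence $|\mu(B_r(x))|\ge\frac12|\mu|(B_r(x))$ for all sufficiently small $r$. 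Combined with your flux bound $|\mu(B_r(x))|\le n\omega_n\|\sigma\|_{\L^\infty}r^{n-1}$ (valid for all but countably many $r$, which suffices after enlarging $r$ by a factor at most $2$ and using monotonicity of $r\mapsto|\mu|(B_r(x))$), this gives $\Theta(x)\le C(n)\|\sigma\|_{\L^\infty}<\infty$ for $|\mu|$-a.e.\ $x$ in one stroke, with no splitting into absolutely continuous and singular parts. Until this substitution is made, the proof is incomplete; with it, the argument is correct.

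You should also be aware that this is not the route the paper takes: the lemma is quoted from Chen \& Frid, whose proof avoids densities entirely. There one first invokes the Hahn decomposition of $\mu$ and inner regularity, so that it suffices to prove $\mu(K)=0$ (the \emph{signed} mass, where cancellation is harmless) for compact $K$ with $\H^{n-1}(K)=0$ contained in the positive or negative set of $\mu$; then one covers $K$ by balls $B_{r_i}(x_i)$ with $\sum_ir_i^{n-1}<\varepsilon$, builds a cutoff $\p_\varepsilon\in{\mathscr D}(\Omega)$ with $\p_\varepsilon\equiv1$ near $K$ and $\int_\Omega|\D\p_\varepsilon|\dx\le C(n)\varepsilon$, and estimates $|\int_\Omega\p_\varepsilon\,\d\mu|=|\int_\Omega\sigma\cdot\D\p_\varepsilon\dx|\le C(n)\|\sigma\|_{\L^\infty}\varepsilon$. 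The two approaches are of comparable depth: Chen--Frid dispose of cancellation \emph{before} localizing (via Hahn) and need only the elementary capacitary construction of cutoffs, while your argument disposes of it \emph{after} localizing (via differentiation against $|\mu|$) and buys, as a by-product, the stronger quantitative statement $|\Div\,\sigma|\le C(n)\|\sigma\|_{\L^\infty}\H^{n-1}$ on the set where $\Theta$ is finite.
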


Lemma \ref{lem:Chen-Frid} has been proved by Chen \& Frid
\cite[Proposition 3.1]{CHEFRI99}.

\begin{lemma}[finiteness of divergences with a sign]
    \label{lem:signed-divs-finite}
  If $\Omega$ is bounded with $\H^{n-1}(\partial\Omega)<\infty$ and
  $\sigma\in\L^\infty(\Omega,\R^n)$ satisfies $\Div\,\sigma\le0$ in
  ${\mathscr D}'(\Omega)$, then we necessarily have $\sigma\in\DM$. Moreover,
  there holds $({-}\Div\,\sigma)(\Omega)\le\frac{n\omega_n}{\omega_{n-1}}
  \|\sigma\|_{\L^\infty(\Omega,\R^n)}\H^{n-1}(\partial\Omega)$ with
  the volume $\omega_n$ of the unit ball in $\R^n$.
\end{lemma}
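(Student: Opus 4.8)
The plan is to prove the statement by testing the distributional inequality $\Div\,\sigma\le0$ against suitable nonnegative test functions and extracting a quantitative bound. The key point is that $\Div\,\sigma\le0$ in $\mathscr D'(\Omega)$ means $-\int_\Omega\sigma\cdot\D\p\dx\le0$, i.e. $\int_\Omega\sigma\cdot\D\p\dx\ge0$, for every $\p\in C_c^\infty(\Omega)$ with $\p\ge0$. By the Riesz--Markov representation theorem, a nonnegative distribution is automatically a (nonnegative) Radon measure, so $-\Div\,\sigma$ is a nonnegative Radon measure on $\Omega$; the real content of the lemma is the \emph{finiteness} estimate, which upgrades this to $\sigma\in\DM$.

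To obtain the bound, I would construct a specific family of test functions that approximate the constant $1$ on $\Omega$ while having controlled gradients. Concretely, I would use cut-off functions based on the distance to the boundary: for small $t>0$, set $\p_t\coleq\eta_t\circ d_{\partial\Omega}$, where $d_{\partial\Omega}$ is the distance function and $\eta_t$ is a Lipschitz profile equal to $1$ for argument $\ge t$ and linearly decreasing to $0$ at the boundary. Then $\p_t\nearrow\1_\Omega$ pointwise, and $|\D\p_t|\le\frac1t\1_{\{0<d_{\partial\Omega}<t\}}$. Plugging $\p_t$ into the inequality gives
\[
  ({-}\Div\,\sigma)(\{d_{\partial\Omega}>t\})
  \le\int_\Omega\sigma\cdot\D\p_t\dx
  \le\|\sigma\|_{\L^\infty(\Omega,\R^n)}\cdot\frac1t\,\Ln(\{0<d_{\partial\Omega}<t\})\,,
\]
where I have used $\p_t$ (suitably mollified) as test function and the definition of $-\Div\,\sigma$ as a measure on the left. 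The decisive input is then the geometric estimate $\frac1t\Ln(\{0<d_{\partial\Omega}<t\})\to\frac{n\omega_n}{\omega_{n-1}}\H^{n-1}(\partial\Omega)$ as $t\to0$, or more precisely a uniform bound $\limsup_{t\to0}\frac1t\Ln(\{0<d_{\partial\Omega}<t\})\le\frac{n\omega_n}{\omega_{n-1}}\H^{n-1}(\partial\Omega)$; this is a Minkowski-content type comparison relating the volume of thin boundary tubes to the Hausdorff measure of the boundary. Letting $t\to0$ and using that $\{d_{\partial\Omega}>t\}\nearrow\Omega$ together with inner regularity of the measure $-\Div\,\sigma$ yields the asserted estimate on $({-}\Div\,\sigma)(\Omega)$, which in particular is finite, so $\sigma\in\DM$.

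The hard part will be justifying the geometric Minkowski-content estimate with the exact constant $\frac{n\omega_n}{\omega_{n-1}}$, since $\H^{n-1}(\partial\Omega)<\infty$ alone does not in general guarantee that the Minkowski content equals the Hausdorff measure. I expect the correct constant to arise from a covering argument: cover $\partial\Omega$ economically by balls whose $\H^{n-1}$-content controls $\H^{n-1}(\partial\Omega)$, estimate the volume of the $t$-neighbourhood of each piece by that of a ball of radius comparable to $t$, and track how the ratio $\frac{\omega_n}{\omega_{n-1}}$ of ball volumes to $(n{-}1)$-dimensional contents enters. A cleaner route, which I would prefer, is to invoke a known density or isoperimetric-type comparison (or the relative isoperimetric inequality on tubular neighbourhoods) giving directly $\Ln(\{0<d_{\partial\Omega}<t\})\le\frac{n\omega_n}{\omega_{n-1}}\,t\,\H^{n-1}(\partial\Omega)$ up to lower-order terms. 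Once this volume bound is in hand, the remaining steps---mollification to make $\p_t$ an admissible smooth test function, passage to the limit, and inner regularity---are routine, so the entire difficulty is concentrated in pinning down the sharp boundary-tube volume estimate with the stated dimensional constant.
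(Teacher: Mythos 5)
Your first step---identifying $-\Div\,\sigma$ as a nonnegative Radon measure via Riesz representation---is exactly the paper's route, which then establishes finiteness ``by a reasoning based on the divergence theorem.'' The quantitative half of your argument, however, hinges on the boundary-tube estimate
$\limsup_{t\to0}\frac1t\Ln(\{0<d_{\partial\Omega}<t\})\le\frac{n\omega_n}{\omega_{n-1}}\H^{n-1}(\partial\Omega)$,
and this is not merely ``the hard part'': it is false under the sole hypothesis $\H^{n-1}(\partial\Omega)<\infty$. The upper Minkowski content of a compact set is not controlled by its $\H^{n-1}$ measure. For instance, in the plane let $K\coleq\{0\}\cup\bigcup_kA_k$, where $A_k$ is a $\delta_k$-separated set of $N_k=\delta_k^{-3/2}$ points contained in a disk of radius $O(\delta_k^{1/4})$ about the origin and $\delta_k\to0$; then $K$ is compact and countable, so $\H^1(K)=0$, yet along $t=\delta_k/3$ one has $\frac1t\Ln(\{d_K<t\})\ge\pi N_kt=\tfrac\pi3N_k\delta_k\to\infty$. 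Taking $\Omega\coleq B(0,10)\setminus K$ produces a bounded open set with $\H^1(\partial\Omega)=\H^1(\partial B(0,10))<\infty$ for which your tube-volume bound fails with \emph{every} constant, while the conclusion of the lemma still holds. No covering or isoperimetric refinement can repair this, because the obstruction is precisely that $\H^{n-1}$ does not see portions of $\partial\Omega$ that are Hausdorff-null (or unrectifiable) but ``Minkowski-fat''; a distance-function cut-off works at a single scale $t$ and is forced to pay for all of them.

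The quantity that actually does the work is the perimeter, not the tube volume. By Lemma \ref{lem:BV-ext} with $u\equiv1$ (this is where your target constant really lives), the hypothesis $\H^{n-1}(\partial\Omega)<\infty$ already gives $\1_\Omega\in\BV(\R^n)$ with $\mathbf P(\Omega)=|\D\1_\Omega|(\R^n)\le\frac{n\omega_n}{\omega_{n-1}}\H^{n-1}(\partial\Omega)$; the constant $\frac{n\omega_n}{\omega_{n-1}}$ arises from a \emph{multi-scale} covering of $\partial\Omega$ by balls $B(x_i,r_i)$ with $\sum_i\omega_{n-1}r_i^{n-1}\le\H^{n-1}(\partial\Omega)+\varepsilon$, whose union has perimeter at most $\sum_in\omega_nr_i^{n-1}=\frac{n\omega_n}{\omega_{n-1}}\sum_i\omega_{n-1}r_i^{n-1}$ --- balls of different radii at different points, which is exactly what the level sets of the distance function cannot emulate. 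One then tests $\Div\,\sigma\le0$ with nonnegative $\p_j\in{\mathscr D}(\Omega)$ satisfying $\p_j\uparrow\1_\Omega$ and $\limsup_j\int_\Omega|\D\p_j|\dx\le\frac{n\omega_n}{\omega_{n-1}}\H^{n-1}(\partial\Omega)$ (e.g.\ functions equal to $1$ on a large compact subset of $\Omega$ and cut off across such a family of balls, or smoothings of $\1_{\Omega\setminus\bigcup_iB(x_i,r_i)}$), obtaining $({-}\Div\,\sigma)(\Omega)=\lim_j\int_\Omega\sigma\cdot\D\p_j\dx\le\|\sigma\|_{\L^\infty(\Omega,\R^n)}\limsup_j\int_\Omega|\D\p_j|\dx$. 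This is the divergence-theorem reasoning the paper alludes to: the bound is insensitive to the pathological part of $\partial\Omega$ because only a perimeter-type quantity, not the measure of a metric neighbourhood, enters. I would encourage you to redo the second half of your proof along these lines; the Riesz step and the final passage to the limit by inner regularity can be kept as they are.
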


Indeed, it follows from the Riesz representation theorem that $\Div\,\sigma$ in
Lemma \ref{lem:signed-divs-finite} is a Radon measure. The finiteness of this
measure will be established in \cite{SCHSCH16super1} by a reasoning based on the
divergence theorem.

\vspace{3ex}

\noindent\textbf{\boldmath$\BV$-functions.} We mostly follow the terminology of
\cite{AMBFUSPAL00}, but briefly comment on additional conventions and results.
For $u\in\BV(\Omega)$, we recall that $\H^{n-1}$-{\ae} point in $\Omega$ is
either a Lebesgue point (also called an approximate continuity point) or an
approximate jump point of $u$; compare \cite[Sections 3.6,\,3.7]{AMBFUSPAL00}.
We write $u^+$ for the $\H^{n-1}$-{\ae} defined representative of $u$ which takes
the Lebesgue values in the Lebesgue points and the larger of the two jump values
in the approximate jump points. Correspondingly, $u^-$ takes on the lesser jump
values, and we set $u^\ast\coleq\frac12(u^+{+}u^-)$. Finally, if $\Omega$ has
finite perimeter in $\R^n$, we write $u_{\partial^\ast\Omega}^{\rm int}$ for
the $\H^{n-1}$-{\ae} defined interior trace of $u$ on the reduced boundary
$\partial^\ast\Omega$ of $\Omega$ (compare
\cite[Sections 3.3,\,3.5,\,3.7]{AMBFUSPAL00}), and in the case that
$\H^{n-1}(\partial\Omega\setminus\partial^\ast\Omega)=0$ we also denote the
same trace by $u_{\partial\Omega}^{\rm int}$.

The following two lemmas are crucial for our purposes. The first one extends
\cite[Proposition 3.62]{AMBFUSPAL00} and is obtained by essentially the same
reasoning. The second one follows by combining
\cite[Theorem 2.5]{CARDALLEAPAS88} and \cite[Lemma 1.5, Section 6]{DALMASO83};
compare also \cite[Sections 4,\,10]{FEDZIE72}.

\begin{lemma}[$\BV$ extension by zero]
  If\label{lem:BV-ext} we have $\H^{n-1}(\partial\Omega)<\infty$, then for every
  $u\in\BV(\Omega)\cap\L^\infty(\Omega)$ we have $\1_\Omega u\in\BV(\R^n)$ and
  $|\D(\1_\Omega u)|(\partial\Omega)
  \le\frac{n\omega_n}{\omega_{n-1}}\|u\|_{\L^\infty(\Omega)}\H^{n-1}(\partial\Omega)$.
  In particular, $\Omega$ is a set of finite perimeter in $\R^n$, and
  $u_{\partial^\ast\Omega}^{\rm int}$ is well-defined.
\end{lemma}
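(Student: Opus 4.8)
The plan is to establish that $v\coleq\1_\Omega u$ has finite total variation on $\R^n$ and to control the part of $|\D v|$ carried by $\partial\Omega$. Since $v=u\in\BV(\Omega)$ on the open set $\Omega$ and $v\equiv0$ on the open set $\R^n\setminus\overline\Omega$, the restriction $\D v\ecke\Omega$ equals $\D u\ecke\Omega$, which has finite mass $|\D u|(\Omega)$, while $\D v$ vanishes on $\R^n\setminus\overline\Omega$. As $\H^{n-1}(\partial\Omega)<\infty$ forces $\Ln(\partial\Omega)=0$, no absolutely continuous contribution can sit on $\partial\Omega$, so the only remaining, and genuinely new, object is the purely singular measure $\D v\ecke\partial\Omega$. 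Thus everything reduces to the estimate $|\D v|(\partial\Omega)\le\frac{n\omega_n}{\omega_{n-1}}\|u\|_{\L^\infty(\Omega)}\H^{n-1}(\partial\Omega)$, which simultaneously shows $v\in\BV(\R^n)$.

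To bound $|\D v|(\partial\Omega)$ I would fix a direction $\nu\in S^{n-1}$ and exploit the characterization of $\BV$ through one-dimensional restrictions (as in \cite[Section 3.11]{AMBFUSPAL00}). For $\H^{n-1}$-almost every $z$ in the hyperplane $\nu^\perp$, the slice $t\mapsto v(z+t\nu)$ is a one-dimensional $\BV$ function and, by Eilenberg's inequality, meets $\partial\Omega$ in a finite set $\partial\Omega\cap(z+\R\nu)$. Along such a slice $v$ takes values in $[-\|u\|_{\L^\infty(\Omega)},\|u\|_{\L^\infty(\Omega)}]$ on $\Omega$ and the value $0$ off $\overline\Omega$, so the variation of the slice carried by the null set $\partial\Omega\cap(z+\R\nu)$ stems only from jumps at the boundary crossings and is controlled, up to the oscillation factor at each crossing, by $\|u\|_{\L^\infty(\Omega)}\,\H^0(\partial\Omega\cap(z+\R\nu))$. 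Integrating the localized slicing identity over $z\in\nu^\perp$ then yields
\[
  |\langle\nu,\D v\rangle|(\partial\Omega)
  \le\|u\|_{\L^\infty(\Omega)}\int_{\nu^\perp}\H^0\big(\partial\Omega\cap(z+\R\nu)\big)\,\d\H^{n-1}(z)\,,
\]
where $\langle\nu,\D v\rangle$ denotes the directional component of the vector measure $\D v$.

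Next I would recover the full variation from its directional components by averaging over $\nu\in S^{n-1}$. Writing $\D v=\theta\,|\D v|$ with the unit polar vector $\theta$, the elementary identity $\int_{S^{n-1}}|\langle\nu,e\rangle|\,\d\H^{n-1}(\nu)=2\omega_{n-1}$ for every unit vector $e$ gives
\[
  |\D v|(\partial\Omega)
  =\frac1{2\omega_{n-1}}\int_{S^{n-1}}|\langle\nu,\D v\rangle|(\partial\Omega)\,\d\H^{n-1}(\nu)\,.
\]
Inserting the slicewise bound and using the integralgeometric (Eilenberg) inequality
\[
  \int_{\nu^\perp}\H^0\big(\partial\Omega\cap\pi_\nu^{-1}(z)\big)\,\d\H^{n-1}(z)\le\H^{n-1}(\partial\Omega)
  \qq\text{for each }\nu\in S^{n-1}\,,
\]
where $\pi_\nu\colon\R^n\to\nu^\perp$ is the orthogonal projection, the integration over the sphere produces the factor $n\omega_n=\H^{n-1}(S^{n-1})$ and leaves precisely the asserted dimensional constant $\frac{n\omega_n}{\omega_{n-1}}$ in front of $\|u\|_{\L^\infty(\Omega)}\H^{n-1}(\partial\Omega)$.

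Combining the finite interior mass with this boundary estimate shows $v=\1_\Omega u\in\BV(\R^n)$ and delivers the claimed inequality. Specializing to $u\equiv1$ gives $\1_\Omega\in\BV(\R^n)$, i.e.\ $\Omega$ is a set of finite perimeter with $P(\Omega)\le\frac{n\omega_n}{\omega_{n-1}}\H^{n-1}(\partial\Omega)$; once this is known, the interior trace $u_{\partial^\ast\Omega}^{\rm int}$ is well-defined $\H^{n-1}$-almost everywhere on $\partial^\ast\Omega$ by the standard trace theory for $\BV$ functions on sets of finite perimeter. The hard part will be the careful treatment of the slicing step for the merely $\H^{n-1}$-finite, possibly highly irregular boundary $\partial\Omega$: one must justify that the directional variation restricted to the Lebesgue-null set $\partial\Omega$ is genuinely captured by the integrated boundary-crossing count, so that no interior variation of $u$ is miscounted, and one must invoke Eilenberg's inequality to guarantee that almost every slice meets $\partial\Omega$ in a finite set and that the crossing-number integral is controlled by $\H^{n-1}(\partial\Omega)$ even though $\partial\Omega$ need not be rectifiable. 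Tracking the oscillation factor at each crossing against the spherical average is what pins down the precise constant $\frac{n\omega_n}{\omega_{n-1}}$.
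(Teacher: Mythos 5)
Your route --- slicing along lines, using the Eilenberg/integral-geometric inequality to see that almost every line meets $\partial\Omega$ in a finite set with integrable crossing count, bounding the variation of the slice of $\1_\Omega u$ on $\partial\Omega$ by the jumps at these crossings, and then averaging over directions via Cauchy's formula --- is essentially the argument the paper intends: the lemma is presented as an extension of \cite[Proposition 3.62]{AMBFUSPAL00} ``obtained by essentially the same reasoning'', and that proposition is proved by exactly this slicing and crossing-count scheme. One piece of bookkeeping needs correction, however: at a boundary crossing both one-sided limits of the slice of $\1_\Omega u$ lie in $[-\|u\|_{\L^\infty(\Omega)},\|u\|_{\L^\infty(\Omega)}]$, and both sides of the crossing may lie in $\Omega$ (think of a slit domain with $u$ of opposite sign on the two sides of the slit), so the jump is only bounded by the oscillation $2\|u\|_{\L^\infty(\Omega)}$; your displayed slice inequality must therefore carry a factor $2$, which then cancels against the $2$ in the normalization $\frac1{2\omega_{n-1}}$ to yield exactly $\frac{n\omega_n}{\omega_{n-1}}$, whereas your formulas as literally written would produce $\frac{n\omega_n}{2\omega_{n-1}}$ from an intermediate bound that is false (the slit example violates it). Finally, to avoid circularity you should first conclude $\1_\Omega u\in\BV(\R^n)$ from the slicing criterion in the coordinate directions --- the finiteness of the crossing-count integral together with $|\D u|(\Omega)<\infty$ gives integrably bounded pointwise variation of almost every slice --- before writing down the polar decomposition of $\D(\1_\Omega u)$ and restricting it to $\partial\Omega$; as you note, the ``in particular'' part then follows by taking $u\equiv1$ and invoking the standard trace theory on sets of finite perimeter.
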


\begin{lemma}[$\H^{n-1}$-{\ae} approximation of a $\BV$ function from
    above]
  For\label{lem:approx-BV-above} every
  $u\in\BV_\loc(\Omega)\cap\L^\infty_\loc(\Omega)$ there exist
  $v_\ell\in\W^{1,1}_\loc(\Omega)\cap\L^\infty_\loc(\Omega)$ such that $v_1\ge
  v_\ell\ge u$ holds $\Ln$-{\ae} on $\Omega$ for every $\ell\in\N$ and such that
  $v_\ell^\ast$ converges $\H^{n-1}$-{\ae} on $\Omega$ to $u^+$.
\end{lemma}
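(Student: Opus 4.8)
The plan is to build the $v_\ell$ from the superlevel sets $E_t\coleq\{u>t\}$ of $u$, exploiting the coarea representation together with the fact that passing from the mean value $u^\ast$ to the upper representative $u^+$ corresponds precisely to approximating from above. Concretely, for $\Ln$-{\ae} $x$ one has $u(x)=\int_0^\infty\1_{E_t}(x)\,\d t-\int_{-\infty}^0\bigl(1-\1_{E_t}(x)\bigr)\,\d t$, and one checks that for $\H^{n-1}$-{\ae} $x$ the analogous formula with the upper representatives $\1_{E_t}^+=\1_{E_t^{(1)}\cup\partial^\ast E_t}$ in place of $\1_{E_t}$ yields exactly $u^+(x)$. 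The mechanism behind the whole lemma is already visible here: at a jump point the upper value of $\1_{E_t}$ equals $1$ for every threshold $t$ below the larger jump value, so an approximation constrained to lie above $u$ necessarily selects the upper value $u^+$, and not the mean value $u^\ast$.

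For the construction I would fix a standard mollifier $\rho_\varepsilon$ and set, for each level $t$, $w_t^\varepsilon\coleq\1_{(E_t)_{2\varepsilon}}\ast\rho_\varepsilon$, where $(E_t)_{2\varepsilon}\coleq\{x:\mathrm{dist}(x,E_t)<2\varepsilon\}$. Then $w_t^\varepsilon$ is smooth with values in $[0,1]$ and satisfies $w_t^\varepsilon\ge\1_{E_t}$ pointwise, since the ball $B_\varepsilon(x)$ around any $x\in E_t$ is contained in $(E_t)_{2\varepsilon}$. Integrating in $t$, the function $\tilde v_\varepsilon\coleq\int_0^\infty w_t^\varepsilon\,\d t-\int_{-\infty}^0\bigl(1-w_t^\varepsilon\bigr)\,\d t$ lies in $\W^{1,1}_\loc(\Omega)\cap\L^\infty_\loc(\Omega)$, obeys $\tilde v_\varepsilon\ge u$ $\Ln$-{\ae}, and converges to $u$ in $\L^1_\loc(\Omega)$ as $\varepsilon\downarrow0$; using a mollification radius that degenerates near $\partial\Omega$ (and an exhaustion $\Omega_k\Subset\Omega$ with a diagonal choice of $\varepsilon=\varepsilon_\ell\downarrow0$) keeps the approximants in $\W^{1,1}_\loc\cap\L^\infty_\loc$ and renders the convergence strict on subdomains. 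Finally, to enforce the uniform upper bound, I would put $v_1\coleq\tilde v_{\varepsilon_1}$ and $v_\ell\coleq\min(\tilde v_{\varepsilon_\ell},\tilde v_{\varepsilon_1})$; this preserves membership in $\W^{1,1}_\loc\cap\L^\infty_\loc$ and retains $v_1\ge v_\ell\ge u$ $\Ln$-{\ae}, while not affecting the limit.

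One half of the convergence is then elementary. Since $\tilde v_\varepsilon\ge u$ $\Ln$-{\ae} and $\tilde v_\varepsilon\in\W^{1,1}_\loc$ has an approximate limit $\tilde v_\varepsilon^\ast$ at $\H^{n-1}$-{\ae} point, the inclusion $\{\tilde v_\varepsilon>t\}\supseteq\{u>t\}$ (valid $\Ln$-{\ae}) shows that $\{\tilde v_\varepsilon>t\}$ has positive upper density wherever $\{u>t\}$ does; letting $t$ increase to $u^+(x)$ gives $\tilde v_\varepsilon^\ast\ge u^+$ $\H^{n-1}$-{\ae}. The same monotonicity of precise representatives under $\Ln$-{\ae} inequalities yields $u^+\le v_\ell^\ast\le\tilde v_{\varepsilon_\ell}^\ast$ $\H^{n-1}$-{\ae}, so in particular $\liminf_\ell v_\ell^\ast\ge u^+$ $\H^{n-1}$-{\ae}.

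The main obstacle is the matching upper estimate, i.e.\ $\H^{n-1}$-{\ae} convergence of the precise representatives with limit at most $u^+$. The difficulty is genuine: $\Ln$-{\ae} or even strict $\L^1$ convergence carries no information on the precise representatives along the jump set $J_u$, which is exactly the $\H^{n-1}$-dimensional set where $u^+\ne u^\ast$, and on which the limit could a priori be any value in $[u^-,u^+]$. This is the point at which I would invoke the cited results: the from-above approximation of \cite[Theorem 2.5]{CARDALLEAPAS88}, combined with \cite[Lemma 1.5, Section 6]{DALMASO83}, upgrades the strict convergence to $\H^{n-1}$-{\ae} convergence of the precise representatives to a representative of $u$ (so that, up to a subsequence, $\tilde v_{\varepsilon_\ell}^\ast$ converges $\H^{n-1}$-{\ae} to a value in $[u^-,u^+]$), the underlying mechanism being capacity and quasicontinuity control of the precise representatives off $\H^{n-1}$-null sets, in the spirit of the fine-properties analysis of \cite[Sections 4,\,10]{FEDZIE72}. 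Once this is available, the elementary lower bound forces the limit to be exactly $u^+$, and the squeeze $u^+\le v_\ell^\ast\le\tilde v_{\varepsilon_\ell}^\ast\to u^+$ yields $v_\ell^\ast\to u^+$ $\H^{n-1}$-{\ae} along the (relabelled) subsequence, which is the assertion.
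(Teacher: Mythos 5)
There is a genuine gap, and it sits exactly where the whole difficulty of the lemma lies: in the construction of the approximants. The dilation $(E_t)_{2\varepsilon}=\{\mathrm{dist}(\cdot,E_t)<2\varepsilon\}$ is a topological, not a measure-theoretic, operation: as $\varepsilon\downarrow0$ it shrinks only to the closure $\overline{E_t}$, which for level sets of a $\BV$ function can be vastly larger than $E_t$, whichever $\Ln$-representative of $E_t$ you choose. Concretely, let $E=\bigcup_iB(q_i,r_i)$ with $\{q_i\}$ dense in $\Omega$ and radii chosen so that $\sum_ir_i^{n-1}<\infty$ (finite perimeter) and $\sum_ir_i^n$ is small, and take $u=\1_E\in\BV(\Omega)\cap\L^\infty(\Omega)$. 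For $t\in(0,1)$ every realization of $E_t=\{u>t\}$ contains a dense set, so $(E_t)_{2\varepsilon}=\R^n$ for every $\varepsilon>0$, hence $w_t^\varepsilon\equiv1$ and $\tilde v_\varepsilon\equiv1$. Your approximants therefore do not converge to $u$ in $\L^1_\loc$, let alone strictly, and there is nothing for the cited results to ``upgrade''. (The same example defeats the variant $\mathrm{ess\,sup}_{B_\varepsilon(x)}u$.) Producing $\W^{1,1}$ functions above such a $u$ whose precise representatives nonetheless collapse $\H^{n-1}$-{\ae} to $u^+=\1_{E^{(1)}\cup\partial^\ast E}$ requires the excess over $u$ to be concentrated on sets of small measure \emph{and} small $1$-capacity around each ball; that capacitary construction is precisely the nontrivial content of \cite[Theorem 2.5]{CARDALLEAPAS88}, and it cannot be reproduced by metric dilation plus mollification.

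A second, lesser issue is that even for a sequence $v_h\ge u$ that does converge strictly, the step ``strict convergence implies $\H^{n-1}$-{\ae} convergence of the precise representatives along a subsequence'' is itself the substance you would need to extract from \cite[Lemma 1.5]{DALMASO83}; you state it as a plausible mechanism rather than verify that the cited lemma applies in the form required, and you apply it to a sequence whose strict convergence has not been established. The parts of your argument that are sound are the elementary ones: that $v\in\W^{1,1}_\loc$ with $v\ge u$ $\Ln$-{\ae} forces $v^\ast\ge u^+$ $\H^{n-1}$-{\ae}, and that passing to $v_\ell\coleq\min(\tilde v_{\varepsilon_\ell},\tilde v_{\varepsilon_1})$ preserves all the required properties. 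For comparison, the paper does not attempt a construction at all: it obtains the lemma directly by combining \cite[Theorem 2.5]{CARDALLEAPAS88} with \cite[Lemma 1.5, Section 6]{DALMASO83} (the existence of the approximating sequence and the identification of its $\H^{n-1}$-{\ae} limit with $u^+$ being exactly what those results supply), so the honest core of your proposal reduces to the same citations, wrapped around a construction that fails.
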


\section{\boldmath Anzellotti type pairings for $\L^\infty$
  divergence-measure fields}\label{sec:Anzellotti}

We first introduce a local pairing of divergence-measure fields and gradient
measures.

\begin{definition}[local pairing]\label{defi:local-pairing}
  Consider $u\in\BV_\loc(\Omega)\cap\L^\infty_\loc(\Omega)$ and
  $\sigma\in\DMloc$. Then --- since Lemma \ref{lem:Chen-Frid} guarantees that
  $u^+$ is $|\Div\,\sigma|$-{\ae} defined --- we can define the distribution
  \[
    \lrb{\sigma,\D u^+}
    \coleq\Div(u\sigma)-u^+\Div\,\sigma
    \in{\mathscr D}'(\Omega)\,.
  \]
\end{definition}

Written out this definition means
\begin{equation}\label{eq:defi-pairing}
  \lrb{\sigma,\D u^+}(\p)
  ={-}\int_\Omega u\sigma\cdot\D\p\dx
  -\int_\Omega\p u^+\,\d(\Div\,\sigma)
  \qq\text{for }\p\in{\mathscr D}(\Omega)\,.
\end{equation}

Next we define a global pairing, which incorporates Dirichlet boundary values
given by a function $u_0$.

\begin{definition}[up-to-the-boundary pairing]\label{defi:global-pairing}
  Consider $u_0\in\W^{1,1}(\Omega)\cap\L^\infty(\Omega)$,
  $u\in\BV(\Omega)\cap\L^\infty(\Omega)$, and $\sigma\in\DM$. Then we define the
  distribution $\lrb{\sigma,\D u^+}_{u_0}\in{\mathscr D}'(\R^n)$ by setting
  \begin{equation}\label{eq:sigmaDu-Dir}
    \lrb{\sigma,\D u^+}_{u_0}(\p)
    \coleq{-}\int_\Omega(u{-}u_0)\sigma\cdot\D\p\dx
    -\int_\Omega\p(u^+{-}u_0^\ast)\,\d(\Div\,\sigma)
    +\int_\Omega\p\sigma\cdot\D u_0\dx
  \end{equation}
  for $\p\in{\mathscr D}(\R^n)$.
\end{definition}

We emphasize that the pairings in Definitions \ref{defi:local-pairing} and
\ref{defi:global-pairing} coincide on $\p$ with compact support in $\Omega$
(since an integration-by-parts then eliminates $u_0$ in \eqref{eq:sigmaDu-Dir}).
However, the up-to-the-boundary pairing stays well-defined even if $\p$ does
\emph{not} vanish on $\partial\Omega$. In addition, we remark that both pairings
can be explained analogously with other representatives of $u$.

In some of the following statements we impose a mild regularity assumption
on $\partial\Omega$, namely we require
\begin{equation}\label{eq:domain-assum}
  \mathcal{H}^{n-1}(\partial\Omega)=\mathbf P(\Omega)<\infty\,,
\end{equation}
where $\mathbf P$ stands for the perimeter. We remark that the condition
\eqref{eq:domain-assum} is equivalent to having $\mathbf P(\Omega)<\infty$ and
$\mathcal{H}^{n-1}(\partial\Omega\setminus\partial^*\Omega)=0$ and also to
having $\1_\Omega\in\BV(\R^n)$ and $|\D\1_\Omega|=\H^{n-1}\ecke\partial\Omega$.
For a more refined discussion we refer to \cite{SCHMIDT15}, where the relevance
of \eqref{eq:domain-assum} for certain approximation results is pointed out.

Two vital properties of the pairing are recorded in the next statements. The
proofs will appear in \cite{SCHSCH16super1}.

\pagebreak[2]

\begin{lemma}[the pairing trivializes on $\W^{1,1}$-functions]\text{}
    \label{lem:W11pairing}
\begin{itemize}
  \item{\rm(local statement)}
  For $u\in\W^{1,1}_\loc(\Omega)\cap\L^\infty_\loc(\Omega)$, $\sigma\in\DMloc$, and
  $\p\in{\mathscr D}(\Omega)$, we have
  \[
    \lrb{\sigma,\D u^+}(\p)=\int_\Omega\p\sigma\cdot\D u\dx\,.
  \]
  \item{\rm(global statement with traces)}
  If $\Omega$ is bounded with \eqref{eq:domain-assum}, then for every
  $\sigma\in\DM$ there exists a uniquely determined normal trace
  $\sigma_{\rm n}^\ast\in\L^\infty(\partial\Omega;\H^{n-1})$ with
  \[
    \|\sigma_{\rm n}^\ast\|_{\L^\infty(\partial\Omega;\H^{n-1})}
    \le\|\sigma\|_{\L^\infty(\Omega,\R^n)}
  \]
  such that for all $u,u_0\in\W^{1,1}(\Omega)\cap\L^\infty(\Omega)$ and
  $\p\in{\mathscr D}(\R^n)$ there holds
  \[
    \lrb{\sigma,\D u^+}_{u_0}(\p)
    =\int_\Omega\p(\sigma\cdot\D u)\dx
    +\int_{\partial\Omega}\p(u{-}u_0)_{\partial\Omega}^{\rm int}\sigma_{\rm n}^\ast\,\d\H^{n-1}\,.
  \]
  \end{itemize}
\end{lemma}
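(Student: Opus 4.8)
The plan is to establish both statements of Lemma~\ref{lem:W11pairing} by reducing to the defining formulas \eqref{eq:defi-pairing} and \eqref{eq:sigmaDu-Dir} and then integrating by parts, with the main work being the correct bookkeeping of the divergence measure and the boundary trace.

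Let me think about the local statement first. We have $u\in\W^{1,1}_\loc$, so $\D u$ is an $\L^1$ function (no singular part), and $u^+$ coincides with the Lebesgue representative $\H^{n-1}$-a.e. The definition gives
$\lrb{\sigma,\D u^+}(\p) = -\int_\Omega u\sigma\cdot\D\p\dx - \int_\Omega \p u^+\,\d(\Div\sigma)$.
I want to show this equals $\int_\Omega \p\sigma\cdot\D u\dx$. The natural idea: approximate $u$ by smooth functions, or better, use the product rule for divergence-measure fields. For $u\in\W^{1,1}\cap\L^\infty$, the product $u\sigma$ should satisfy $\Div(u\sigma) = u^+\Div\sigma + \sigma\cdot\D u\,\Ln$ in the distributional sense — this is exactly a Leibniz rule. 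So the real content is a product/Leibniz rule for $\L^\infty$ divergence-measure fields paired with $\W^{1,1}$ functions, where the representative $u^+$ (equivalently the precise representative, since for $\W^{1,1}$ there are no jumps) is the correct one to pair against $\Div\sigma$.

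For the local case, I expect to argue as follows. By a mollification/truncation argument, approximate $u$ by $u_\varepsilon\in\C^\infty\cap\L^\infty$ with $u_\varepsilon\to u$ in $\W^{1,1}_\loc$ and boundedly. For smooth $u_\varepsilon$ the classical Leibniz rule $\Div(u_\varepsilon\sigma)=u_\varepsilon\Div\sigma+\sigma\cdot\D u_\varepsilon\,\Ln$ holds in $\mathscr{D}'$; testing against $\p$ gives $-\int u_\varepsilon\sigma\cdot\D\p\dx = \int\p u_\varepsilon\,\d(\Div\sigma)+\int\p\sigma\cdot\D u_\varepsilon\dx$. The first and third integrals pass to the limit by $\L^1_\loc$-convergence of $u_\varepsilon$ and $\D u_\varepsilon$ against the bounded $\sigma$. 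The delicate term is $\int\p u_\varepsilon\,\d(\Div\sigma)$: here I need $u_\varepsilon\to u^+$ $|\Div\sigma|$-a.e.\ (or in $\L^1(|\Div\sigma|)$). This is where Lemma~\ref{lem:Chen-Frid} is essential — because $|\Div\sigma|$ charges no $\H^{n-1}$-null set, and the mollified values converge to the precise representative $\H^{n-1}$-a.e., the convergence holds $|\Div\sigma|$-a.e.; dominated convergence (using boundedness of $u_\varepsilon$) then yields the claim. Rearranging gives the desired identity.

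For the global statement, the strategy is to extend the local identity up to the boundary and read off the trace. Starting from \eqref{eq:sigmaDu-Dir}, I would apply Lemma~\ref{lem:BV-ext} to extend $u,u_0$ by zero, so that $\1_\Omega u,\1_\Omega u_0\in\BV(\R^n)$, and invoke the Leibniz rule on all of $\R^n$ for the compactly-supported field $\1_\Omega\sigma$. The boundary integral arises precisely from the jump of the extended functions across $\partial\Omega$: the singular part of $\D(\1_\Omega u)$ concentrated on $\partial\Omega$ equals $u^{\rm int}_{\partial\Omega}\,\nu\,\H^{n-1}\ecke\partial\Omega$ (under \eqref{eq:domain-assum}, so that $\partial\Omega$ is $\H^{n-1}$-equivalent to $\partial^\ast\Omega$). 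Pairing $\sigma$ with this normal component defines $\sigma_{\rm n}^\ast$; concretely one sets $\sigma_{\rm n}^\ast = \sigma\cdot\nu$ in the appropriate weak/trace sense on $\partial^\ast\Omega$. The bound $\|\sigma_{\rm n}^\ast\|_{\L^\infty}\le\|\sigma\|_{\L^\infty}$ follows because the normal trace cannot exceed the full vector's $\L^\infty$ norm. The hard part will be to show this normal trace is genuinely well-defined (as an $\L^\infty(\partial\Omega;\H^{n-1})$ function independent of $u,u_0$) and uniquely determined: I would isolate $\sigma_{\rm n}^\ast$ by testing with varying $u$, using that the interior trace operator $u\mapsto u^{\rm int}_{\partial\Omega}$ is surjective enough (e.g.\ onto a dense class) to pin down $\sigma_{\rm n}^\ast$ $\H^{n-1}$-a.e. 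The main obstacle throughout is the careful identification of the boundary term with a single trace function and the verification that all limit passages respect the measure $\Div\sigma$, for which the absolute-continuity Lemma~\ref{lem:Chen-Frid} is the decisive tool.
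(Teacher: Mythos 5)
The paper itself contains no proof of Lemma \ref{lem:W11pairing} --- it explicitly defers all proofs to the companion paper \cite{SCHSCH16super1} --- so your attempt can only be judged on its own merits. Your argument for the local statement is correct and is the standard one: mollify, apply the classical Leibniz rule (i.e.\ test $\Div\,\sigma$ against $u_\varepsilon\p$), and pass to the limit, using that for $u\in\W^{1,1}_\loc(\Omega)$ the representative $u^+$ agrees $\H^{n-1}$-{\ae} with the Lebesgue representative, that mollifications converge to it $\H^{n-1}$-{\ae}, and that Lemma \ref{lem:Chen-Frid} upgrades this to $|\Div\,\sigma|$-{\ae} convergence so that dominated convergence applies. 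No complaints there.

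The global statement is where there is a genuine gap. The entire mathematical content of that bullet is the \emph{existence} of a normal trace $\sigma_{\rm n}^\ast$ as an element of $\L^\infty(\partial\Omega;\H^{n-1})$ with $\|\sigma_{\rm n}^\ast\|_{\L^\infty(\partial\Omega;\H^{n-1})}\le\|\sigma\|_{\L^\infty(\Omega,\R^n)}$, and your proposal asserts this rather than proves it. Writing ``$\sigma_{\rm n}^\ast=\sigma\cdot\nu$ in the appropriate weak/trace sense'' begs the question: $\sigma$ is merely $\L^\infty$, has no pointwise boundary values, and $\partial\Omega$ is $\Ln$-null, so no naive restriction exists. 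Likewise, ``the bound follows because the normal trace cannot exceed the full vector's $\L^\infty$ norm'' is exactly the statement to be proved. The standard route (Anzellotti \cite{ANZELLOTTI83b}, Chen--Frid \cite{CHEFRI99}, or the adaptation of \cite[Section 5]{BECSCH15} that the authors cite) is: define the boundary functional as the defect $T(\p,u{-}u_0)\coleq\lrb{\sigma,\D u^+}_{u_0}(\p)-\int_\Omega\p\,\sigma\cdot\D u\dx$, show it depends only on the trace $(u{-}u_0)_{\partial\Omega}^{\rm int}$, and establish the key estimate $|T|\le\|\sigma\|_{\L^\infty(\Omega,\R^n)}\int_{\partial\Omega}|\p|\,|(u{-}u_0)_{\partial\Omega}^{\rm int}|\,\d\H^{n-1}$ by replacing $u{-}u_0$ with a function having the same trace but supported in an arbitrarily thin inner neighbourhood of $\partial\Omega$; this localization is precisely where the hypothesis \eqref{eq:domain-assum} and the strict interior approximation results of \cite{SCHMIDT15} enter, and it is absent from your outline. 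Only after this estimate does Riesz representation (or Radon--Nikod\'ym on $\H^{n-1}\ecke\partial\Omega$) produce $\sigma_{\rm n}^\ast$ with the claimed $\L^\infty$ bound; your uniqueness argument via density of traces is then fine. Your alternative of extending by zero and computing $\Div(\1_\Omega\sigma)$ on $\R^n$ can be made to work, but it faces the same issue in disguise: one must prove that $\1_\Omega\sigma\in\mathcal{DM}^\infty(\R^n,\R^n)$ and that the part of its divergence on $\partial\Omega$ is absolutely continuous with respect to $\H^{n-1}\ecke\partial\Omega$ with density bounded by $\|\sigma\|_{\L^\infty(\Omega,\R^n)}$, which again requires the thin-neighbourhood argument.
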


Next we focus on bounded $\sigma$ with $\Div\,\sigma\le0$. By
Lemma \ref{lem:signed-divs-finite} the pairings stay well-defined in this case.

\begin{proposition}[the pairing is a bounded measure]\text{}
    \label{prop:pairing-estimate}
  Fix $\sigma\in\L^\infty(\Omega,\R^n)$ with $\Div\,\sigma\le0$ in
  ${\mathscr D}'(\Omega)$.
  \begin{itemize}
  \item{\rm(local estimate)}
  For $u\in\BV_\loc(\Omega)\cap\L^\infty_\loc(\Omega)$, the distribution
  $\lrb{\sigma,\D u^+}$ is a signed Radon measure on $\Omega$ with
  \[
    |\lrb{\sigma,\D u^+}|
    \le\|\sigma\|_{\L^\infty(\Omega,\R^n)}|\D u|
    \qq\text{on }\Omega\,.
  \]
  \item{\rm(global estimate with equality at the boundary)}
  If\/ $\Omega$ is bounded with \eqref{eq:domain-assum}, for
  $u_0\in\W^{1,1}(\Omega)\cap\L^\infty(\Omega)$ and
  $u\in\BV(\Omega)\cap\L^\infty(\Omega)$ the pairing $\lrb{\sigma,\D u^+}_{u_0}$
  is a finite signed Borel measure on $\R^n$ with
  \begin{equation}\label{eq:boundary-equality}
    \big|\lrb{\sigma,\D u^+}_{u_0}
    -(u{-}u_0)_{\partial\Omega}^{\rm int}\sigma_{\rm n}^\ast\H^{n-1}\ecke\partial\Omega\big|
    \le\|\sigma\|_{\L^\infty(\Omega,\R^n)}|\D u|\ecke\Omega\,.
  \end{equation}
  \end{itemize}
\end{proposition}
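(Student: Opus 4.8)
The plan is to reduce everything to the local estimate and to base the local estimate on the elementary identity
\[
  \lrb{\sigma,\D u^+}
  =\lrb{\sigma,\D u^\ast}-\tfrac12(u^+{-}u^-)\Div\,\sigma\,,
\]
which follows at once from Definition \ref{defi:local-pairing}, applied to the two representatives $u^+$ and $u^\ast$, together with $u^\ast=\tfrac12(u^+{+}u^-)$. Here $\lrb{\sigma,\D u^\ast}$ is the pairing built with the mean-value representative; mollifying $u$ with a symmetric kernel, using the trivialization $\lrb{\sigma,\D(u{*}\rho_\varepsilon)^+}=(\sigma\cdot\D(u{*}\rho_\varepsilon))\Ln$ from Lemma \ref{lem:W11pairing}, and letting $\varepsilon\to0$ with the help of $u{*}\rho_\varepsilon\to u^\ast$ $\H^{n-1}$-{\ae} (hence $|\Div\,\sigma|$-{\ae} by Lemma \ref{lem:Chen-Frid}) and of the weak-$*$ convergence $|\D(u{*}\rho_\varepsilon)|\Ln\rightharpoonup|\D u|$, one obtains that $\lrb{\sigma,\D u^\ast}$ is a signed Radon measure with $|\lrb{\sigma,\D u^\ast}|\le\|\sigma\|_{\L^\infty(\Omega,\R^n)}|\D u|$.

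Now the sign condition enters in a soft but decisive way. Since $u^+{-}u^-\ge0$ and $\Div\,\sigma\le0$, the correction term $-\tfrac12(u^+{-}u^-)\Div\,\sigma$ is a nonnegative measure, so testing the displayed identity against $\p\in{\mathscr D}(\Omega)$ with $\p\ge0$ gives the lower bound $\lrb{\sigma,\D u^+}(\p)\ge\lrb{\sigma,\D u^\ast}(\p)\ge-\|\sigma\|_{\L^\infty(\Omega,\R^n)}\int_\Omega\p\,\d|\D u|$. In particular $\lrb{\sigma,\D u^+}+\|\sigma\|_{\L^\infty(\Omega,\R^n)}|\D u|$ is a nonnegative distribution, hence a nonnegative Radon measure, which already shows that $\lrb{\sigma,\D u^+}$ is a signed Radon measure and delivers one half of the asserted bound. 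Off the jump set $J_u$ we have $u^+=u^\ast$ $\H^{n-1}$-{\ae}, so the correction is supported on $J_u$ and $\lrb{\sigma,\D u^+}=\lrb{\sigma,\D u^\ast}$ there; thus the full estimate $|\lrb{\sigma,\D u^+}|\le\|\sigma\|_{\L^\infty(\Omega,\R^n)}|\D u|$ holds on $\Omega\setminus J_u$.

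The main obstacle is the complementary upper bound on $J_u$. On this $(n{-}1)$-rectifiable set, Lemma \ref{lem:Chen-Frid} yields $\Div\,\sigma\ecke J_u=(\sigma_{\rm n}^+{-}\sigma_{\rm n}^-)\H^{n-1}\ecke J_u$ in terms of the two one-sided normal traces of $\sigma$ along $J_u$, while $\lrb{\sigma,\D u^\ast}\ecke J_u=\tfrac12(\sigma_{\rm n}^+{+}\sigma_{\rm n}^-)(u^+{-}u^-)\H^{n-1}\ecke J_u$. Inserting these into the identity produces the clean formula $\lrb{\sigma,\D u^+}\ecke J_u=(u^+{-}u^-)\,\sigma_{\rm n}^-\,\H^{n-1}\ecke J_u$, so the representative $u^+$ selects precisely the one-sided trace from the lower side. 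Since $|\D u|\ecke J_u=(u^+{-}u^-)\H^{n-1}\ecke J_u$, the estimate on $J_u$ reduces to the essential bound $|\sigma_{\rm n}^-|\le\|\sigma\|_{\L^\infty(\Omega,\R^n)}$ — the genuinely nontrivial point, which is the interior counterpart of the boundary bound recorded in Lemma \ref{lem:W11pairing}. I expect to establish it as in the boundary case, by localizing $\sigma$ on the two sides of $J_u$ and using a Gauss--Green formula, or, avoiding the separate treatment of $J_u$, by proving a coarea identity $\lrb{\sigma,\D u^+}=\int_\R\lrb{\sigma,\D\1_{\{u\ge t\}}^+}\,\d t$ and invoking the normal-trace bound on each reduced boundary $\partial^\ast\{u\ge t\}$.

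Finally, for the global statement I would note that $\lrb{\sigma,\D u^+}_{u_0}$ vanishes on ${\mathscr D}(\R^n\setminus\overline\Omega)$ and coincides with the local pairing on ${\mathscr D}(\Omega)$, so its restriction to $\Omega$ is controlled by the local estimate and only the behaviour on $\partial\Omega$ remains. Under \eqref{eq:domain-assum}, Lemma \ref{lem:BV-ext} makes the interior trace $(u{-}u_0)_{\partial\Omega}^{\rm int}$ well-defined, and I would combine this with the global trace formula of Lemma \ref{lem:W11pairing}: approximating $u$ from above by $W^{1,1}$ functions $v_\ell$ as in Lemma \ref{lem:approx-BV-above}, one has
\[
  \lrb{\sigma,\D v_\ell^+}_{u_0}(\p)
  =\int_\Omega\p\,(\sigma\cdot\D v_\ell)\dx
  +\int_{\partial\Omega}\p\,(v_\ell{-}u_0)_{\partial\Omega}^{\rm int}\sigma_{\rm n}^\ast\,\d\H^{n-1}\,,
\]
and letting $\ell\to\infty$ (with $v_\ell\to u$ in $\L^1$, $v_\ell^\ast\to u^+$ $\H^{n-1}$-{\ae}, and the induced convergence of interior traces) identifies the boundary part of $\lrb{\sigma,\D u^+}_{u_0}$ as $(u{-}u_0)_{\partial\Omega}^{\rm int}\sigma_{\rm n}^\ast\H^{n-1}\ecke\partial\Omega$. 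Subtracting this finite boundary measure and applying the local estimate to the interior part yields both the finiteness of $\lrb{\sigma,\D u^+}_{u_0}$ and the asserted inequality \eqref{eq:boundary-equality}.
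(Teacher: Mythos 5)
Since this announcement defers the proof of Proposition \ref{prop:pairing-estimate} to the companion paper, there is no in-text argument to compare against; judged on its own terms, your proposal correctly isolates the structure of the problem but leaves the decisive step unproved. The identity $\lrb{\sigma,\D u^+}=\lrb{\sigma,\D u^\ast}+\tfrac12(u^+{-}u^-)({-}\Div\,\sigma)$, the mollification argument for the $u^\ast$-pairing (essentially \cite[Theorem 3.2]{CHEFRI99}), and the observation that the sign of $\Div\,\sigma$ makes the correction term non-negative and hence yields the one-sided bound $\lrb{\sigma,\D u^+}\ge{-}\|\sigma\|_{\L^\infty(\Omega,\R^n)}|\D u|$ are all correct; this is the soft half. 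But the complementary upper bound on the jump set --- which is the actual content of the local estimate --- is reduced by you to the existence of bounded one-sided normal traces $\sigma_{\rm n}^\pm$ of $\sigma$ on the countably rectifiable set $J_u$ together with the representations $\Div\,\sigma\ecke J_u=(\sigma_{\rm n}^+{-}\sigma_{\rm n}^-)\H^{n-1}\ecke J_u$ and $\lrb{\sigma,\D u^\ast}\ecke J_u=\tfrac12(\sigma_{\rm n}^+{+}\sigma_{\rm n}^-)(u^+{-}u^-)\H^{n-1}\ecke J_u$, and for these you offer only that you ``expect to establish'' them. These facts are true and available in the literature on divergence-measure fields, but they are precisely the nontrivial input here, and neither of your two suggested routes (a Gauss--Green argument on the two sides of $J_u$, or the coarea identity $\lrb{\sigma,\D u^+}=\int_\R\lrb{\sigma,\D\1_{\{u\ge t\}}^+}\,\d t$, itself a claim of comparable depth) is carried out. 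As written, the proposal is a correct reduction plus a program, not a proof.

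Two further points. First, the machinery the paper itself assembles suggests a more economical route that avoids interior normal traces entirely: Lemma \ref{lem:approx-BV-above}, in the full strength of its sources \cite{CARDALLEAPAS88,DALMASO83}, produces $v_\ell\in\W^{1,1}$ with $v_\ell\ge u$, $v_\ell^\ast\to u^+$ $\H^{n-1}$-{\ae}, \emph{and} convergence of the total variations; combining the trivialization of Lemma \ref{lem:W11pairing} with dominated convergence (via Lemma \ref{lem:Chen-Frid}) then gives $\lrb{\sigma,\D u^+}(\p)=\lim_\ell\int_\Omega\p\,\sigma\cdot\D v_\ell\dx\le\|\sigma\|_{\L^\infty(\Omega,\R^n)}\int_\Omega\p\,\d|\D u|$ for $\p\ge0$ directly. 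Note that without the convergence of total variations --- which the version of the lemma recorded in this announcement does not state --- this limit passage would not close, since mere $\L^1$-approximation from above gives no upper control on $\int_\Omega\p|\D v_\ell|\dx$; so whichever route you take, some quantitatively strict approximation is indispensable. Second, in the global statement your parenthetical ``the induced convergence of interior traces'' is a genuine gap of the same nature: the trace operator on $\BV(\Omega)$ is not continuous under $\L^1$ or weak$\ast$ convergence, only under strict convergence, so the boundary integral in Lemma \ref{lem:W11pairing} cannot be passed to the limit along $v_\ell$ without first securing a strictly convergent one-sided approximation valid up to the boundary of a domain satisfying \eqref{eq:domain-assum}.
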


\section{\boldmath Weakly super-$1$-harmonic functions}\label{sec:super-1}

We now give a definition of super-$1$-harmonic functions, which employs the
convenient notation
\[
  \Sinfty
  \coleq\{\sigma\in\L^\infty(\Omega,\R^n)\,:\,
  |\sigma|\le1\text{ holds }\Ln\text{-{\ae} on }\Omega\}\,.
\]

\begin{definition}[weakly super-$1$-harmonic]
    \label{defi:super-1}
  We call $u\in\BV_\loc(\Omega)\cap\L^\infty_\loc(\Omega)$ weakly
  super-$1$-harmonic on $\Omega$ if there exists some $\sigma\in\Sinfty$ with
  $\Div\,\sigma\le0$ in ${\mathscr D}'(\Omega)$ and $\lrb{\sigma,\D u^+}=|\D u|$
  on $\Omega$.
\end{definition}

We next provide a compactness result for super-$1$-harmonic functions. We
emphasize that this result does not hold anymore if one replaces $u^+$ by any
other representative of $u$ in the definition. We also remark that the
assumed type of convergence is very natural, and indeed the statement applies to
every \emph{increasing} sequence of super-$1$-harmonic functions which is
bounded in $\BV_\loc(\Omega)$ and $\L^\infty_\loc(\Omega)$.

\begin{theorem}[convergence from below preserves super-$1$-harmonicity]
  Consider a sequence of weakly super-$1$-harmonic functions $u_k$ on $\Omega$.
  If $u_k$ locally weak$\ast$ converges to a limit $u$ both in $\BV_\loc(\Omega)$
  and $\L^\infty_\loc(\Omega)$ and if $u_k\le u$ holds on $\Omega$ for all
  $k\in\N$, then $u$ is weakly super-$1$-harmonic on $\Omega$.
\end{theorem}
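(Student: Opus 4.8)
The plan is to produce a limiting vector field $\sigma$ by weak$\ast$ compactness and then to verify, for the pair $(u,\sigma)$, the three defining properties of Definition~\ref{defi:super-1}: $\sigma\in\Sinfty$, $\Div\,\sigma\le0$, and $\lrb{\sigma,\D u^+}=|\D u|$. For each $k$ fix $\sigma_k\in\Sinfty$ with $\Div\,\sigma_k\le0$ and $\lrb{\sigma_k,\D u_k^+}=|\D u_k|$. Since $\|\sigma_k\|_{\L^\infty(\Omega,\R^n)}\le1$, Banach--Alaoglu yields a (non-relabelled) subsequence with $\sigma_k\rightharpoonup^\ast\sigma$ in $\L^\infty(\Omega,\R^n)$, and weak$\ast$-closedness of the unit ball gives $\sigma\in\Sinfty$. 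Testing $\Div\,\sigma_k\le0$ against nonnegative $\p\in{\mathscr D}(\Omega)$ and passing to the limit gives $\Div\,\sigma\le0$; in particular $\sigma\in\DMloc$, and writing $\mu_k\coleq{-}\Div\,\sigma_k\ge0$, $\mu\coleq{-}\Div\,\sigma\ge0$ one has $\int_\Omega\p\,\d\mu_k\to\int_\Omega\p\,\d\mu$ for such $\p$. As $u\in\BV_\loc(\Omega)\cap\L^\infty_\loc(\Omega)$, Proposition~\ref{prop:pairing-estimate} shows $\lrb{\sigma,\D u^+}$ is a signed Radon measure with $|\lrb{\sigma,\D u^+}|\le|\D u|$; hence it suffices to prove the one-sided bound $\lrb{\sigma,\D u^+}(\p)\ge\int_\Omega\p\,\d|\D u|$ for all nonnegative $\p\in{\mathscr D}(\Omega)$, which then upgrades to the claimed equality.

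To establish this bound I would pass to the limit in the identity $\int_\Omega\p\,\d|\D u_k|={-}\int_\Omega u_k\sigma_k\cdot\D\p\dx+\int_\Omega\p u_k^+\,\d\mu_k$. By lower semicontinuity of the weighted total variation under $\D u_k\rightharpoonup^\ast\D u$, the left-hand side has $\liminf\ge\int_\Omega\p\,\d|\D u|$. Splitting ${-}\int_\Omega u_k\sigma_k\cdot\D\p={-}\int_\Omega(u_k{-}u)\sigma_k\cdot\D\p-\int_\Omega u(\sigma_k{-}\sigma)\cdot\D\p$ and using $u_k\to u$ in $\L^1_\loc(\Omega)$ (with $\|\sigma_k\|_{\L^\infty}\le1$) together with $\sigma_k\rightharpoonup^\ast\sigma$ tested against $u\D\p\in\L^1(\Omega,\R^n)$, this convective term converges to ${-}\int_\Omega u\sigma\cdot\D\p$. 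All the difficulty is therefore concentrated in the divergence term, and it suffices to show $\limsup_k\int_\Omega\p u_k^+\,\d\mu_k\le\int_\Omega\p u^+\,\d\mu$: granting this and combining with the previous two limits gives $\int_\Omega\p\,\d|\D u|\le{-}\int_\Omega u\sigma\cdot\D\p+\int_\Omega\p u^+\,\d\mu=\lrb{\sigma,\D u^+}(\p)$.

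The main obstacle is exactly this last estimate, and here both the representative $u^+$ and the monotonicity $u_k\le u$ are indispensable. First, from $u_k\le u$ $\Ln$-{\ae} one gets $u_k^+\le u^+$ $\H^{n-1}$-{\ae}: the upper representative equals the approximate upper limit, and since $\{u_k>t\}\subset\{u>t\}$ up to $\Ln$-null sets, wherever $\{u>t\}$ has zero density so does $\{u_k>t\}$. By Lemma~\ref{lem:Chen-Frid} this inequality holds $\mu_k$-{\ae}, so with $\p,\mu_k\ge0$ we have $\int_\Omega\p u_k^+\,\d\mu_k\le\int_\Omega\p u^+\,\d\mu_k$. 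Next, by Lemma~\ref{lem:approx-BV-above} choose $v_\ell\in\W^{1,1}_\loc(\Omega)\cap\L^\infty_\loc(\Omega)$ with $v_1\ge v_\ell\ge u$ and $v_\ell^\ast\to u^+$ $\H^{n-1}$-{\ae}; the same monotonicity gives $u^+\le v_\ell^\ast$ $\H^{n-1}$-{\ae}, hence $\int_\Omega\p u^+\,\d\mu_k\le\int_\Omega\p v_\ell^\ast\,\d\mu_k$. For fixed $\ell$ the lack of continuity of $v_\ell^\ast$ is circumvented by Lemma~\ref{lem:W11pairing}: since $v_\ell^+=v_\ell^\ast$ holds $\H^{n-1}$-{\ae} (hence $\mu_k$- and $\mu$-{\ae}), the definition of the pairing combined with $\lrb{\sigma_k,\D v_\ell^+}(\p)=\int_\Omega\p\sigma_k\cdot\D v_\ell\dx$ rewrites $\int_\Omega\p v_\ell^\ast\,\d\mu_k=\int_\Omega\p\sigma_k\cdot\D v_\ell\dx+\int_\Omega v_\ell\sigma_k\cdot\D\p\dx$. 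Both integrands, $\p\D v_\ell$ and $v_\ell\D\p$, lie in $\L^1(\Omega,\R^n)$ with compact support, so $\sigma_k\rightharpoonup^\ast\sigma$ lets me pass $k\to\infty$, and the identical identity for $\sigma$ identifies the limit as $\int_\Omega\p v_\ell^\ast\,\d\mu$. Thus $\limsup_k\int_\Omega\p u_k^+\,\d\mu_k\le\int_\Omega\p v_\ell^\ast\,\d\mu$ for every $\ell$, and letting $\ell\to\infty$ by dominated convergence (the bounds $u^+\le v_\ell^\ast\le v_1^\ast$ furnish a $\mu$-integrable majorant on the compact support of $\p$) yields the desired estimate. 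I expect the monotonicity $u_k^+\le u^+$ and the careful tracking of which representative enters each $\mu_k$-integral to be the delicate points, the remainder being soft functional analysis.
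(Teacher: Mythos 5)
Your proposal is correct and follows essentially the same route as the paper's proof: weak$\ast$ compactness for $\sigma_k$, the squeeze $u_k^+\le u^+\le v_\ell^\ast$ via Lemma \ref{lem:approx-BV-above}, passage to the limit in $k$ through the trivialized pairing $\lrb{\sigma_k,\D v_\ell^+}$ of Lemma \ref{lem:W11pairing}, dominated convergence in $\ell$, lower semicontinuity of the total variation, and Proposition \ref{prop:pairing-estimate} for the reverse inequality. The only difference is organizational (you start from the coupling identity for $u_k$ and isolate $\limsup_k\int_\Omega\p u_k^+\,\d\mu_k\le\int_\Omega\p u^+\,\d\mu$ as the key claim, whereas the paper runs the same chain of inequalities starting from $\lrb{\sigma,\D u^+}(\p)$), which does not change the substance.
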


\begin{proof}{}
  In view of Definition \ref{defi:super-1} there exist $\sigma_k\in\Sinfty$ with
  $\Div\,\sigma_k\le0$ in ${\mathscr D}'(\Omega)$ and
  $\lrb{\sigma_k,\D u_k^+}=|\D u_k|$ on $\Omega$. Possibly passing to a
  subsequence, we assume that $\sigma_k$ weak$\ast$ converges in
  $\L^\infty(\Omega,\R^n)$ to $\sigma\in\Sinfty$ with $\Div\,\sigma\le0$ in
  ${\mathscr D}'(\Omega)$, and as before we regard $\Div\,\sigma_k$ and
  $\Div\,\sigma$ as non-positive measures on $\Omega$. We fix a non-negative
  $\p\in{\mathscr D}(\Omega)$ and approximations $v_\ell$ of $u$
  with the properties of Lemma \ref{lem:approx-BV-above}. Relying on
  \eqref{eq:defi-pairing}, Lemma \ref{lem:Chen-Frid}, and the dominated
  convergence theorem, we then infer
  \[\begin{aligned}
    \lrb{\sigma,\D u^+}(\p)
    &={-}\int_\Omega u\sigma\cdot\D\p\dx
    +\int_\Omega\p u^+\,\d({-}\Div\,\sigma)\\
    &=\lim_{\ell\to\infty}\bigg[{-}\int_\Omega v_\ell\sigma\cdot\D\p\dx
    +\int_\Omega\p v_\ell^\ast\,\d({-}\Div\,\sigma)\bigg]
    =\lim_{\ell\to\infty}\lrb{\sigma,\D v_\ell^+}(\p)\,.
  \end{aligned}\]
  Since the pairing trivializes on $v_\ell\in\W^{1,1}_\loc(\Omega)$, we can
  exploit the local weak$\ast$ convergence of $\sigma_k$ in
  $\L^\infty_\loc(\Omega,\R^n)$ and the inequalities $v_\ell^\ast\ge u^+\ge u_k^+$
  to arrive at
  \[\begin{aligned}
    \lrb{\sigma,\D v_\ell^+}(\p)
    =\lim_{k\to\infty}\lrb{\sigma_k,\D v_\ell^+}(\p)
    &={-}\lim_{k\to\infty}\int_\Omega v_\ell\sigma_k\cdot\D\p\dx
    +\lim_{k\to\infty}\int_\Omega\p v_\ell^\ast\,\d({-}\Div\,\sigma_k)\\
    &\ge{-}\int_\Omega v_\ell\sigma\cdot\D\p\dx
    +\liminf_{k\to\infty}\int_\Omega\p u_k^+\,\d({-}\Div\,\sigma_k)\,.
  \end{aligned}\]
  Next we rely in turn on the dominated convergence theorem, on the observation
  that $u_k\sigma_k$ locally weakly converges to $u\sigma$ in
  $\L^1_\loc(\Omega,\R^n)$, on the definition in \eqref{eq:defi-pairing}, on the
  coupling $\lrb{\sigma_k,\D u_k^+}=|\D u_k|$, and finally on a lower
  semicontinuity property of the total variation. In this way, we deduce
  \begin{multline*}
    \lim_{\ell\to\infty}\bigg[{-}\int_\Omega v_\ell\sigma\cdot\D\p\dx
    +\liminf_{k\to\infty}\int_\Omega\p u_k^+\,\d({-}\Div\,\sigma_k)\bigg]\\
    \begin{aligned}
      &={-}\int_\Omega u\sigma\cdot\D\p\dx
      +\liminf_{k\to\infty}\int_\Omega\p u_k^+\,\d({-}\Div\,\sigma_k)\\
      &=\liminf_{k\to\infty}\bigg[{-}\int_\Omega u_k\sigma_k\cdot\D\p\dx
      +\int_\Omega\p u_k^+\,\d({-}\Div\,\sigma_k)\bigg]\\
      &=\liminf_{k\to\infty}\lrb{\sigma_k,\D u_k^+}(\p)
      =\liminf_{k\to\infty}\int_\Omega\p\,\d|\D u_k|\\
      &\ge\int_\Omega\p\,\d|\D u|\,.
  \end{aligned}\hspace{2.5cm}\end{multline*}
  Collecting the estimates, we arrive at the inequality
  $\lrb{\sigma,\D u^+}\ge|\D u|$ of measures on $\Omega$. Since the opposite
  inequality is generally valid by Proposition \ref{prop:pairing-estimate},
  we infer that $u$ is weakly super-$1$-harmonic on $\Omega$.
\end{proof}

\section{\boldmath Super-$1$-harmonic functions with respect to Dirichlet data}\label{sec:super-1-Dir}

Finally, we introduce a concept of super-$1$-harmonic functions with respect to
a generalized Dirichlet boundary datum. In \cite{SCHSCH16obst} we will show that
this notion is useful in connection with obstacle problems.

Concretely, consider a bounded $\Omega$ with \eqref{eq:domain-assum},
$u_0\in\W^{1,1}(\Omega)\cap\L^\infty(\Omega)$, and
$u\in\BV(\Omega)\cap\L^\infty(\Omega)$. We then extend the measure $\D u$ on
$\Omega$ to a measure $\D_{u_0}u$ on $\overline\Omega$ which takes into account
the possible deviation of $u_{\partial\Omega}^{\rm int}$ from the boundary datum
$(u_0)_{\partial\Omega}^{\rm int}$. To this end, writing $\nu_\Omega$ for the inward
unit normal of $\Omega$, we set
\[
  \D_{u_0}u
   \coleq\D u\ecke\Omega
   +(u{-}u_0)_{\partial\Omega}^{\rm int}\nu_\Omega\H^{n-1}\ecke\partial\Omega\,.
\]

Now, for $\sigma\in\Sinfty$ with $\Div\,\sigma\le0$ in ${\mathscr D}'(\Omega)$
--- which is meant to potentially satisfy a coupling like
$\lrb{\sigma,\D u^+}_{u_0}=|\D_{u_0}u|$ on $\overline\Omega$ --- we adopt the
viewpoint that $\sigma_{\rm n}^\ast$ should typically equal the constant $1$.
If this is not the case, we compensate for this defect by extending
$({-}\Div\,\sigma)$ to a measure on $\overline\Omega$ with
\begin{equation}\label{eq:div-sigma-boundary}
  (-\Div\,\sigma)\ecke\partial\Omega
  \coleq(1{-}\sigma_{\rm n}^\ast)\H^{n-1}\ecke\partial\Omega\,.
\end{equation}
Then we define a modified pairing $\lrb{\sigma,\D u^+}_{u_0}^\ast$ by interpreting
$u^+$ as $\max\{u_{\partial\Omega}^{\rm int},(u_0)_{\partial\Omega}^{\rm int}\}$ on
$\partial\Omega$ and extending the $(\Div\,\sigma)$-integral in
\eqref{eq:sigmaDu-Dir} from $\Omega$ to $\overline\Omega$. In other words,
we define the measure $\lrb{\sigma,\D u^+}_{u_0}^\ast$ by setting
\begin{equation}\label{eq:mod-pairing}
  \lrb{\sigma,\D u^+}_{u_0}^\ast
  \coleq\lrb{\sigma,\D u^+}_{u_0}
  +\big[(u{-}u_0)_{\partial\Omega}^{\rm int}\big]_+(-\Div\,\sigma)\ecke\partial\Omega\,.
\end{equation}
With these conventions, we now complement Definition \ref{defi:super-1} as
follows.

\begin{definition}[super-$1$-harmonic function with respect to a
    Dirichlet datum]\label{defi:super-1-Dir}
  For bounded $\Omega$ with \eqref{eq:domain-assum} and
  $u_0\in\W^{1,1}(\Omega)\cap\L^\infty(\Omega)$, we say that
  $u\in\BV(\Omega)\cap\L^\infty(\Omega)$ is weakly super-$1$-harmonic on
  $\overline\Omega$ with respect to $u_0$ if there exists some
  $\sigma\in\Sinfty$ with $\Div\,\sigma\le0$ in ${\mathscr D}'(\Omega)$ such
  that the equality of measures $\lrb{\sigma,\D u^+}_{u_0}^\ast=|\D_{u_0}u|$
  holds on $\overline\Omega$.
\end{definition}

For $\sigma\in\Sinfty$ with $\Div\,\sigma\le0$ in ${\mathscr D}'(\Omega)$ and
$u_0\in\W^{1,1}(\Omega)\cap\L^\infty(\Omega)$,
$u\in\BV(\Omega)\cap\L^\infty(\Omega)$, we get from \eqref{eq:boundary-equality},
\eqref{eq:div-sigma-boundary}, \eqref{eq:mod-pairing}
\[
  \lrb{\sigma,\D u^+}_{u_0}^\ast\ecke\partial\Omega
  =\Big(\big[(u{-}u_0)_{\partial\Omega}^{\rm int}\big]_+
  -\big[(u{-}u_0)_{\partial\Omega}^{\rm int}\big]_-\sigma_{\rm n}^\ast\Big)\H^{n-1}\ecke\partial\Omega\,.
\]
Thus, the boundary condition in Definition \ref{defi:super-1-Dir} is equivalent
to the $\H^{n-1}$-{\ae} equality $\sigma_{\rm n}^\ast\equiv{-}1$ on the boundary
portion
$\{u_{\partial\Omega}^{\rm int}<(u_0)_{\partial\Omega}^{\rm int}\}\cap\partial\Omega$,
while no requirement is made on
$\{u_{\partial\Omega}^{\rm int}\ge(u_0)_{\partial\Omega}^{\rm int}\}\cap\partial\Omega$.
We believe that this is very natural, in particular in the case $n=1$,
where super-$1$-harmonicity of $u$ on an interval ${[a,b]}$ just means that $u$
is increasing up to a certain point and decreasing afterwards, and where
$\sigma_{\rm n}^\ast$ can take the value ${-}1$ at most at \emph{one} endpoint and
only if $u$ is monotone on the open interval ${(a,b)}$.

Another indication that Definition \ref{defi:super-1-Dir} is meaningful is
provided by the next statement, which will also be proved in
\cite{SCHSCH16super1}. We emphasize that the statement does not hold anymore
(not even for $n{=}1$, $u_{0;k}=u_0\equiv0$, and $u_k\in\W_0^{1,1}(\Omega)$) if
one replaces $\lrb{\sigma,\D u^+}_{u_0}^\ast$ with $\lrb{\sigma,\D u^+}_{u_0}$
in the definition.

\begin{theorem}\label{theo:convergence-Dir}
  Suppose that $\Omega$ is bounded with \eqref{eq:domain-assum}, and consider
  weakly super-$1$-harmonic functions $u_k\in\BV(\Omega)\cap\L^\infty(\Omega)$ on
  $\overline\Omega$ with respect to boundary data
  $u_{0;k}\in\W^{1,1}(\Omega)\cap\L^\infty(\Omega)$. If $u_{0;k}$ converges strongly
  in $\W^{1,1}(\Omega)$ and weakly$\ast$ in $\L^\infty(\Omega)$ to some $u_0$, and
  if $u_k$ weak$\ast$ converges to a limit $u$ in $\BV(\Omega)$ and
  $\L^\infty(\Omega)$ such that $u_k\le u$ holds on $\Omega$ for all $k\in\N$,
  then $u$ is weakly super-$1$-harmonic on $\overline\Omega$ with respect to
  $u_0$.
\end{theorem}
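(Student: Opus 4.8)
The plan is to produce a single admissible field $\sigma$ that simultaneously witnesses an interior and a boundary condition. Recall from the computation following \eqref{eq:mod-pairing} that, for $\sigma\in\Sinfty$ with $\Div\,\sigma\le0$, the equality $\lrb{\sigma,\D u^+}_{u_0}^\ast=|\D_{u_0}u|$ on $\overline\Omega$ is equivalent to the conjunction of the interior coupling $\lrb{\sigma,\D u^+}=|\D u|$ on $\Omega$ and the $\H^{n-1}$-{\ae} identity $\sigma_{\rm n}^\ast\equiv{-}1$ on $B\coleq\{u_{\partial\Omega}^{\rm int}<(u_0)_{\partial\Omega}^{\rm int}\}\cap\partial\Omega$. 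Accordingly I first fix witnesses $\sigma_k\in\Sinfty$ for the $u_k$. By Lemma \ref{lem:signed-divs-finite} the measures $({-}\Div\,\sigma_k)$ are non-negative with $({-}\Div\,\sigma_k)(\Omega)$ bounded uniformly in $k$, and by Lemma \ref{lem:W11pairing} the normal traces $(\sigma_k)_{\rm n}^\ast$ are bounded in $\L^\infty(\partial\Omega;\H^{n-1})$. Passing to a subsequence, I arrange that $\sigma_k$ weak$\ast$ converges in $\L^\infty(\Omega,\R^n)$ to some $\sigma\in\Sinfty$ with $\Div\,\sigma\le0$, that $({-}\Div\,\sigma_k)\ecke\Omega$ weak$\ast$ converges to a non-negative measure $\nu$ on $\overline\Omega$, and that $(\sigma_k)_{\rm n}^\ast$ weak$\ast$ converges in $\L^\infty(\partial\Omega;\H^{n-1})$ to some $h$ with $|h|\le1$. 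Since Definition \ref{defi:super-1-Dir} only asks for the existence of \emph{one} witness, it is enough to show that this $\sigma$ works.

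For the interior part I would re-run the argument of the theorem in Section \ref{sec:super-1} with this $\sigma$: because each $u_k$ is super-$1$-harmonic on $\overline\Omega$ with respect to $u_{0;k}$, the restriction of its coupling to $\Omega$ reads $\lrb{\sigma_k,\D u_k^+}=|\D u_k|$ on $\Omega$, so the $u_k$ are weakly super-$1$-harmonic on $\Omega$ in the sense of Definition \ref{defi:super-1}, and the global weak$\ast$ convergence implies the local convergence needed there. That computation — built on Lemma \ref{lem:approx-BV-above}, the monotonicity $u_k^+\le u^+$, and lower semicontinuity of the total variation, combined with the opposite inequality from Proposition \ref{prop:pairing-estimate} — yields $\lrb{\sigma,\D u^+}=|\D u|$ on $\Omega$ for this same $\sigma$.

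The boundary part is where the choice of the modified pairing is essential. The key tool is the identity $\Div(\1_\Omega\sigma)=\Div\,\sigma\ecke\Omega+\sigma_{\rm n}^\ast\H^{n-1}\ecke\partial\Omega$, which I obtain from the global statement of Lemma \ref{lem:W11pairing} applied to the constant data $u\equiv1$, $u_0\equiv0$ together with \eqref{eq:sigmaDu-Dir}, and which holds verbatim for each $\sigma_k$. As $\1_\Omega\sigma_k$ weak$\ast$ converges to $\1_\Omega\sigma$ in $\L^\infty(\R^n,\R^n)$, the divergences $\Div(\1_\Omega\sigma_k)$ converge to $\Div(\1_\Omega\sigma)$ in $\mathscr D'(\R^n)$; comparing this limit with the separate weak$\ast$ limits of the interior piece $({-}\Div\,\sigma_k)\ecke\Omega$ and the boundary piece $(\sigma_k)_{\rm n}^\ast\H^{n-1}\ecke\partial\Omega$ on $\overline\Omega$ and restricting to $\partial\Omega$ gives $\sigma_{\rm n}^\ast\H^{n-1}\ecke\partial\Omega=h\,\H^{n-1}\ecke\partial\Omega-\nu\ecke\partial\Omega$. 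Since the escaped interior mass $\nu\ecke\partial\Omega$ is non-negative, this yields the one-sided bound $\sigma_{\rm n}^\ast\le h$ $\H^{n-1}$-{\ae} on $\partial\Omega$, so it suffices to prove $h\equiv{-}1$ on $B$: then $-1\le\sigma_{\rm n}^\ast\le h={-}1$ forces $\sigma_{\rm n}^\ast\equiv{-}1$ there. To identify $h$ on $B$, I use that $u_k\le u$ gives the trace inequality $(u_k)_{\partial\Omega}^{\rm int}\le u_{\partial\Omega}^{\rm int}$, while strong $\W^{1,1}$-convergence of $u_{0;k}$ gives $\L^1(\partial\Omega)$-convergence of $(u_{0;k})_{\partial\Omega}^{\rm int}$, hence $\H^{n-1}$-{\ae} convergence along a further subsequence; thus at $\H^{n-1}$-{\ae} point of $B$ one has $(u_k)_{\partial\Omega}^{\rm int}\le u_{\partial\Omega}^{\rm int}<(u_{0;k})_{\partial\Omega}^{\rm int}$ for all large $k$, so $(\sigma_k)_{\rm n}^\ast={-}1$ there by the boundary condition for $u_k$. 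Comparing the weak$\ast$ $\L^\infty$-limit $\int h\psi$ with the dominated-convergence limit of $\int(\sigma_k)_{\rm n}^\ast\psi$ for $\psi\in\L^1(\partial\Omega)$ supported in $B$ then gives $h\equiv{-}1$ on $B$.

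The step I expect to be the main obstacle is exactly this boundary analysis, since normal traces are \emph{not} weak$\ast$ continuous: interior divergence mass can concentrate on $\partial\Omega$ in the limit. What rescues the argument is that this escaping mass $\nu\ecke\partial\Omega$ carries a favourable sign, so it only pushes $\sigma_{\rm n}^\ast$ below $h$, and we only need the extremal value $\sigma_{\rm n}^\ast={-}1$. It is precisely the $[\,\cdot\,]_+$-truncation and the boundary extension \eqref{eq:div-sigma-boundary} of $({-}\Div\,\sigma)$ built into the modified pairing that render the relevant boundary quantity a non-negative measure and thereby legitimise this one-sided passage to the limit; this is exactly why the statement fails for the unmodified pairing $\lrb{\sigma,\D u^+}_{u_0}$. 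A secondary technical point, handled by the mixed strong/weak convergence above, is the bookkeeping of traces when both $u_k$ and $u_{0;k}$ vary with $k$.
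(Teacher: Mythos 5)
The paper itself does not contain a proof of Theorem \ref{theo:convergence-Dir} --- it is explicitly deferred to the companion paper \cite{SCHSCH16super1} --- so a line-by-line comparison is not possible. Judged on its own terms, your architecture is sound and fits the framework the paper sets up: the reduction of Definition \ref{defi:super-1-Dir} to the conjunction of the interior coupling $\lrb{\sigma,\D u^+}=|\D u|$ and the trace condition $\sigma_{\rm n}^\ast\equiv{-}1$ on $B$ is exactly the reformulation the paper records after \eqref{eq:mod-pairing}; the interior part does carry over from the Section \ref{sec:super-1} argument once a single subsequence of the $\sigma_k$ is fixed; and your boundary analysis via $\Div(\1_\Omega\sigma_k)=\Div\,\sigma_k\ecke\Omega+(\sigma_k)_{\rm n}^\ast\H^{n-1}\ecke\partial\Omega$, with the escaping interior mass $\nu\ecke\partial\Omega\ge0$ only pushing $\sigma_{\rm n}^\ast$ downward, is a correct and rather clean way to exploit the sign and to see why only the extremal value $-1$ survives. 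Your closing remarks about why the modification of the pairing is indispensable also match the paper's own emphasis.

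The one step I would not accept as stated is the claim that strong $\W^{1,1}(\Omega)$-convergence of $u_{0;k}$ yields $\L^1(\partial\Omega;\H^{n-1})$-convergence of the traces $(u_{0;k})_{\partial\Omega}^{\rm int}$. Under the sole assumption \eqref{eq:domain-assum} the trace operator from $\W^{1,1}(\Omega)$ to $\L^1(\partial\Omega;\H^{n-1})$ is \emph{not} bounded: on a planar domain with an outward cusp, such as $\{0<x<1,\ |y|<x^2\}$, the function $x^{-1}$ lies in $\W^{1,1}$ but its boundary trace is not $\H^1$-integrable. So this continuity cannot be quoted as standard; it must either be derived using the uniform $\L^\infty$-bound on $u_{0;k}-u_0$ (truncation plus a quantitative version of Lemma \ref{lem:BV-ext} controlling the boundary mass of $\D(\1_\Omega w)$ by the measure of $\{w\neq0\}$ rather than by $\|w\|_{\L^\infty}\H^{n-1}(\partial\Omega)$), or circumvented altogether. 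A natural way to circumvent it --- and quite possibly closer to what \cite{SCHSCH16super1} does --- is not to argue pointwise on $\partial\Omega$ at all, but to pass to the limit in the measure identity $\lrb{\sigma_k,\D u_k^+}_{u_{0;k}}^\ast=|\D_{u_{0;k}}u_k|$ tested against non-negative $\p\in{\mathscr D}(\R^n)$, where the boundary datum enters only through the terms $\int_\Omega(u_k{-}u_{0;k})\sigma_k\cdot\D\p\dx$ and $\int_\Omega\p\,\sigma_k\cdot\D u_{0;k}\dx$ of \eqref{eq:sigmaDu-Dir} (which converge under strong $\W^{1,1}$-convergence without any trace theorem), and to conclude via lower semicontinuity of $\mu\mapsto|\D_{u_0}u|$-type functionals in the spirit of \cite{CARDALLEAPAS88}, with the reverse inequality supplied by \eqref{eq:boundary-equality}. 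Until the trace-convergence step is either proved or bypassed in this way, your boundary argument has a genuine gap at exactly the point where you identify $h\equiv{-}1$ on $B$.
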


\begin{remark}
  In the situation of the theorem, it follows from the previously recorded
  reformulation of the boundary condition that $u$ is also weakly
  super-$1$-harmonic on $\overline\Omega$ with respect to every
  $\widetilde u_0\in\W^{1,1}(\R^n)\cap\L^\infty(\Omega)$ such that
  $\H^{n-1}(\{u_0^\ast\le u_{\partial\Omega}^{\rm int}<\widetilde u_0^\ast\}
  \cap\partial\Omega)=0$. Roughly speaking, this means that the boundary values
  can always be decreased and that they can even be increased as long as the
  trace of $u$ is not traversed. In view of the $1$-dimensional case, we believe
  that this behavior is very reasonable.
\end{remark}

\end{document}